\def\struckint{\mathop{%
\def\mathpalette##1##2{\mathchoice{##1\displaystyle##2}%
 {##1\textstyle##2}{##1\scriptstyle##2}{##1\scriptscriptstyle##2}}%
\mathpalette
{\vbox\bgroup\baselineskip0pt\lineskiplimit-1000pt\lineskip-1000pt
\halign\bgroup\hfill$}
{##$\hfill\cr{\intop}\cr\diagup\cr\egroup\egroup}%
}\limits}
\def\Nb{\mathop{\mathbb{N}_{}}\nolimits}
\def\fin{\mathop{\textsc{Fin}}\nolimits}
\def\equalinlaw{=_{\mathcal{D}}}
\def\finite{\mathop{\subset_f}}
\def\equalinlaw{\mathop{=_{\mathcal{D}}}\nolimits}			
\def\Xbf{\mathop{\mathbf{X}_{}}\nolimits}		
\def\xbf{\mathop{\mathbf{x}_{}}\nolimits}	
\def\ybf{\mathop{\mathbf{y}_{}}\nolimits}				
\def\Ybf{\mathop{\mathbf{Y}_{}}\nolimits}					
\def\Zbf{\mathop{\mathbf{Z}_{}}\nolimits}
\def\Nb{\mathop{\mathbb{N}_{}}\nolimits}					
\def\ar{\mathop{\alpha}\nolimits}	
\def\dom{\mathop{\mathrm{dom}}\nolimits}
\def\Zb{\mathop{\mathbb{Z}_{}}\nolimits}
\def\FR{\mathop{\mathcal{F}_{\mathcal{R}}}\nolimits}
\def\RNcong{\mathop{\mathcal{R}_{\mathbb{N}}^{\cong}}\nolimits}
\def\Rncong{\mathop{\mathcal{R}_{[n]}^{\cong}}\nolimits}
\def\Rn{\mathop{\mathbf{R}_n}\nolimits}
\def\Ro{\mathop{\mathcal{R}_{[0,1]}}\nolimits}
\newtheorem{thm}{Theorem}[section]
\newtheorem{prop}[thm]{Proposition}
\newtheorem{defn}[thm]{Definition}
\newtheorem{example}[thm]{Example}
\newtheorem{rmk}[thm]{Remark}
\def\dotminussym#1#2{%
  \setbox0=\hbox{$\m@th#1-$}%
  \kern.5\wd0%
  \hbox to 0pt{\hss\hbox{$\m@th#1-$}\hss}%
  \raise.6\ht0\hbox to 0pt{\hss$\m@th#1.$\hss}%
  \kern.5\wd0}
\mathchardef\mhyphen="2D
\begin{document}

\title{Relational exchangeability}
\author{Harry Crane and Walter Dempsey}
\address {Department of Statistics \& Biostatistics, Rutgers University, 110 Frelinghuysen Avenue, Piscataway, NJ 08854, USA}
\email{hcrane@stat.rutgers.edu}
\urladdr{\url{http://stat.rutgers.edu/home/hcrane}}
\address {Department of Statistics, University of Michigan, 1085 S. University Ave,  Ann Arbor, MI 48109, USA}
\email{wdem@umich.edu}
\thanks{H.\ Crane is partially supported by NSF grants CNS-1523785 and CAREER DMS-1554092.}

\keywords{Edge exchangeable graph; Kingman's correspondence; paintbox process; exchangeable random partition}
\date{\today}

\maketitle

\begin{abstract}

A {\em relationally exchangeable structure} is a random combinatorial structure whose law is invariant with respect to relabeling its relations, as opposed to its elements.
Aside from exchangeable random partitions, examples include edge exchangeable random graphs and hypergraphs, path exchangeable processes, and a range of other network-like structures that arise in statistical applications.
We prove a de Finetti-type structure theorem for the general class of relationally exchangeable structures.
\end{abstract}

\section{Introduction}\label{section:introduction}
Consider a random sequence $\mathbf{X}=(X_1,\ldots,X_n)$ taking values in an at most countable set ${S}$.
For concreteness, we may regard ${S}$ as a set of species, so that $\Xbf$ records the species for a random sample of animals from a certain population.
From $\Xbf$, we define an equivalence relation $\sim_{\Xbf}$ on $[n]:=\{1,\ldots,n\}$ by
\begin{equation}\label{eq:equiv-relation}
i\sim_{\Xbf}j\quad\text{if and only if}\quad X_i=X_j.
\end{equation}
We write $\Pi(\Xbf)=\{B_1,B_2,\ldots\}$ to denote the set partition whose blocks $B_1,B_2,\ldots$ are the equivalence classes induced by $\sim_{\Xbf}$.  
We sometimes write $\sim_{\Pi}$ in place of $\sim_{\Xbf}$ when convenient.

Suppose now that $\Xbf=(X_1,\ldots,X_n)$ is {\em exchangeable}, meaning that 
\[(X_{\sigma(1)},\ldots,X_{\sigma(n)})\equalinlaw(X_1,\ldots,X_n)\]
for all permutations $\sigma:[n]\to[n]$, where $\equalinlaw$ denotes {\em equality in law}.
Exchangeability of $\Xbf$ induces exchangeability on the equivalence relation $\Pi:=\Pi(\Xbf)$ as in \eqref{eq:equiv-relation} in the sense that $\Pi^{\sigma}\equalinlaw\Pi$ for all permutations $\sigma:[n]\to[n]$, with $\Pi^{\sigma}$ defined by
\[i\sim_{\Pi^{\sigma}} j\quad\text{if and only if}\quad\sigma^{-1}(i)\sim_{\Pi}\sigma^{-1}(j).\]
Below we present a theory of general {\em relationally exchangeable structures} which contains the above example as a special case and generalizes Kingman's correspondence for exchangeable random partitions \cite{Kingman1978a}.

To appreciate the generality in which we work, let $S$ again be a countable set but now consider a sequence $\Xbf=(X_1,\ldots,X_n)$ taking values in $S\times S$, so that each $X_i$ is an ordered pair $(c_i,r_i)$.
For a concrete application, we can think of each $X_i=(c_i,r_i)$ as identifying the caller and receiver of a phone call sampled uniformly at random from a telephone call log.
While the data comes in the form of a sequence $\Xbf$, the structure relating the calls can be represented by a network-like object as in Figure \ref{fig:calls}.
Since the sequence $\Xbf$ is exchangeable, the model assigns equal probability to structures that are isomorphic up to relabeling edges as in Figures \ref{fig:calls}(a) and \ref{fig:calls}(b).

\begin{figure}[!t]
\includegraphics[scale=0.5]{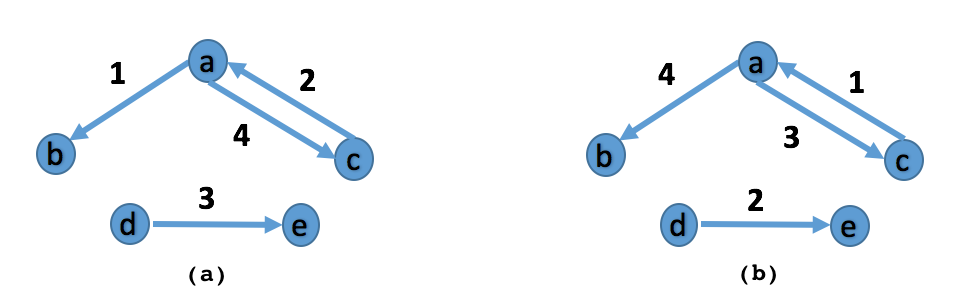}
\caption{(a) Representation of sequence of ordered pairs in $S=\{a,b,c,d,e\}$ with $X_1=(a,b), X_2=(c,a), X_3=(d,e), X_4=(a,c)$.  (b) Representation of sequence of ordered pairs in $S=\{a,b,c,d,e\}$ with $X'_1=(c,a), X'_2=(d,e), X'_3=(a,c), X'_4=(a,b)$.  The object in (b) can be obtained from that in (a) by reordering $(X_1,X_2,X_3,X_4)$ as $(X_2,X_3,X_4,X_1)$.  In both (a) and (b), the edges are labeled according to where they appear in sequence.}
\label{fig:calls}
\end{figure}

Just as we disregarded the species names in passing from $\Xbf$ to its induced equivalence relation $\sim_{\Xbf}$ in \eqref{eq:equiv-relation}, we may also disregard the vertex labels in Figure \ref{fig:calls} to obtain an {\em edge-labeled graph}, as shown in Figure \ref{fig:calls-exch}.
The edge-labeled graph is defined formally as the equivalence class $\Xbf_{\cong}$ of all sequences $\Xbf'$ that yield the same structure as $\Xbf$ after removing vertex labels:
\begin{equation}\label{eq:edge-labeled}
\Xbf_{\cong}:=\{\Xbf':[n]\to S\times{S}: \rho \Xbf'=\Xbf\text{ for some bijection }\rho:S\to S\},
\end{equation} 
where here we overload notation by allowing the bijection $\rho:S\to S$ to act on $S\times S$ by $(c,r)\mapsto(\rho(c),\rho(r))$, so that $\rho\Xbf':=\rho\circ\Xbf':[n]\to S\times{S}$ is well-defined by composition of functions.
Exchangeability of $\Xbf$ immediately implies {\em edge exchangeability} of the edge-labeled graph associated to $\Xbf_{\cong}$ \cite{CD2016e2}.

\begin{figure}[!t]
\includegraphics[scale=0.5]{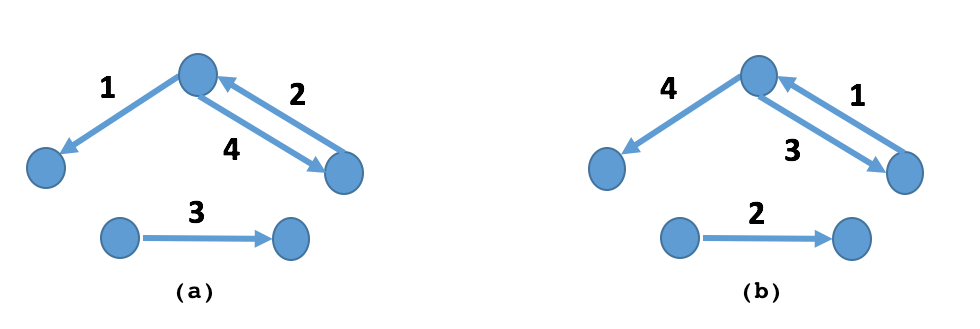}
\caption{(a) Edge-labeled graph induced by disregarding vertex labels in Figure \ref{fig:calls}(a).  (b) Edge-labeled graph induced by disregarding vertex labels in Figure \ref{fig:calls}(b).  The graphs in (a) and (b) have equal probability under an edge exchangeable model.}
\label{fig:calls-exch}
\end{figure}

For another example, let $S$ represent a set of scientists and let
$\Xbf=(X_1,\ldots,X_n)$ record names of coauthors on a collection of
$n$ scientific journal articles sampled uniformly from a database 
(e.g., arXiv).
Each $X_i$ is a finite subset $\{s_{i,1},\ldots,s_{i,k_i}\}\subset S$.
The assumption of uniform sampling again makes $\Xbf$ an exchangeable
sequence, which in turn induces exchangeability on the associated
edge-labeled hypergraph constructed by disregarding vertex labels in
the induced hypergraph structure in a manner analogous to the
construction of edge-labeled graphs from the equivalence class
\eqref{eq:edge-labeled}.

For another example, let $S$ be a set of Internet Protocol (IP) addresses and let each entry of $\Xbf=(X_1,\ldots,X_n)$ correspond to the path taken by a message sent between two IP addresses over the Internet.  Each $X_i$ corresponds to a path $(s_i,a_{i,1},\ldots,a_{i,k_i},t_i)$ from source $s_i$ to target $t_i$ by passing through the intermediate nodes $a_{i,1},\ldots,a_{i,k_i}$, altogether indicating that the path traversed $s$ to $a_{i,1}$ to $a_{i,2}$ and so on until passing from $s_{i,k_1}$ to $t_i$.
If $\Xbf$ was obtained, for example, by uniform random sampling of source-target pairs $(s_i,t_i)$ and then by  applying an algorithm, such as traceroute, to obtain a path between $s_i$ and $t_i$, then the sequence of paths $\Xbf$ is exchangeable and, therefore, so is the induced path-labeled structure shown in Figure \ref{fig:path}.

Another example in the realm of networks is to take each $X_i$ to be an $r$-step ego network, obtained, e.g., by snowball sampling a neighborhood of size $r$ from a randomly chosen vertex.  
The relationally labeled structure is constructed by piecing together the
neighborhoods obtained from the $r$-neighborhoods of $n$ randomly
chosen ``egos'' in this population.  We omit the details of this
example and instead move on to our general treatment.

Myriad other data structures arise according to a similar recipe: let $S$ be a set of elements and $\mathcal{R}$ be a set of relations on the elements in $S$; sample an exchangeable sequence $\Xbf$ taking values in $\mathcal{R}$; construct a structure from $\Xbf$ (as in Figure \ref{fig:calls}) and obtain the corresponding relationally labeled structure by removing the vertex labels (as in Figure \ref{fig:calls-exch}).
The resulting structure is {exchangeable} with respect to relabeling of its relations, a property which we call {\em relational exchangeability}.
We now define this notion formally and prove a generic structure theorem for infinite relationally exchangeable structures.  The formal statement is given in Theorem \ref{thm:general}.

\begin{figure}[!t]
\includegraphics[scale=0.5]{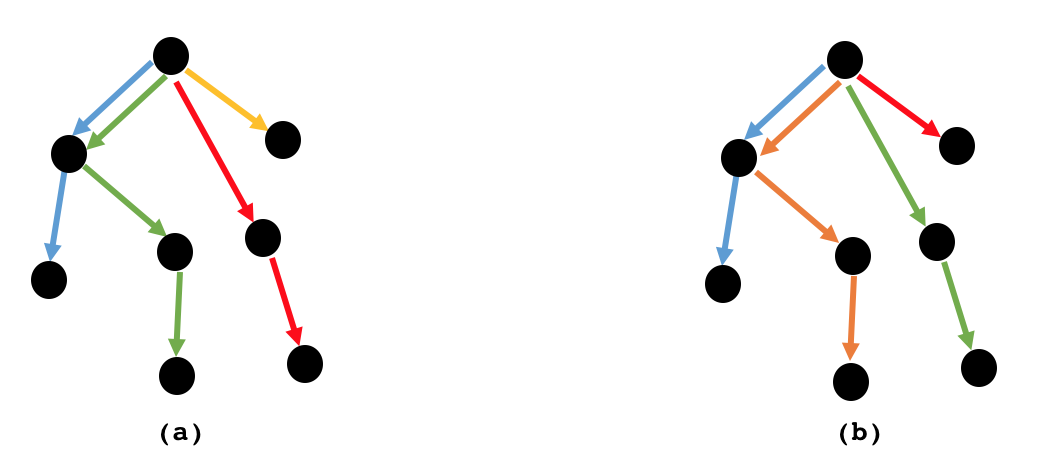}
\caption{(a) and (b) both correspond to the structures induced by taking the equivalence class over paths between vertices.  In this illustration, all paths have the same source vertex and each path is labeled by a different color, so that the structure in (b) can be obtained by recoloring the paths from (a) (i.e., green to orange, red to green, and orange to red).}
\label{fig:path}
\end{figure}

\section{Relational Exchangeability}\label{section:preliminaries}

Let $(S,\mathcal{S})$ be any Borel space.  A countably infinite sequence of $S$-valued random variables $\Xbf=(X_1,X_2,\ldots)$ is {\em exchangeable} if $\Xbf^{\sigma}\equalinlaw\Xbf$ for every permutation $\sigma:\Nb\to\Nb$, where $\equalinlaw$ denotes {\em equality in distribution} and $\Xbf^{\sigma}:=(X_{\sigma(1)},X_{\sigma(2)},\ldots)$ is the reordering of $\Xbf$ according to $\sigma$.
By de Finetti's theorem \cite{deFinetti1937}, the distribution of every such countably infinite sequence can be expressed as a mixture of independent, identically distributed (i.i.d.)~sequences.
In particular, with $\mathcal{P}(S)$ denoting the space of probability measures on $(S,\mathcal{S})$, there is a unique probability measure $\phi$ on $\mathcal{P}(S)$ such that the distribution of $\Xbf$ can be expressed as
\begin{equation}\label{eq:definetti}
\mathbb{P}(\Xbf\in\cdot)=\int_{\mathcal{P}(S)}\nu^{\infty}(\cdot)\phi(d\nu),
\end{equation}
where $\nu^{\infty}$ denotes the infinite product measure induced by $\nu$ on $S^{\infty}$.  We call the measure $\phi$ in \eqref{eq:definetti} the {\em de Finetti measure} of $\Xbf$.

de Finetti's theorem figures into our treatment of relationally exchangeable structures, which we now define.  Note that all of the examples in Section \ref{section:introduction}, and many more that could arise in practice, involve relational structures as they are defined in the coming section.  Example \ref{ex:example} provides a concrete illustration.

\subsection{Relational structures}

Let $\ar:\Nb\to\Nb\cup\{0\}$ be a \emph{signature} 
such that $\ar(j)=0$ implies $\ar(k)=0$ for all $k\geq j$.
An {\em $\ar$-structure with domain $A$} is a collection $\mathcal{A}=(R_1^{\mathcal{A}}, R_2^{\mathcal{A}},\ldots)$, where $A$ is a set and each $R_j^{\mathcal{A}}\subseteq A^{\ar(j)}$ is a relation of arity $\ar(j)$ on $A$.
We adopt the convention $A^0:=\emptyset$ so that $R_j^{\mathcal{A}}=\emptyset$ whenever $\alpha(j)=0$.

We write $\dom\mathcal{A}:=A$ for the {\em domain} of $\mathcal{A}$
and $\fin_{\ar}$ for the set of all $\alpha$-structures 
$\mathcal{A}$ with finite
$\dom\mathcal{A}\subset\Zb=\{\ldots,-1,0,1,\ldots\}$ 
such that $\sum_{j=1}^{\infty}|R_j^{\mathcal{A}}|<\infty$, where $|S|$
denotes the cardinality of a set $S$.  Writing $\dom\mathcal{A}\finite\Zb$ to denote that $\dom\mathcal{A}$ is a finite subset of $\Zb$, we have
\[\fin_{\ar}:=\{\mathcal{A}=(R_1^{\mathcal{A}},R_2^{\mathcal{A}},\ldots): 
\dom\mathcal{A}\finite\Zb\text{ and }\sum_{j=1}^{\infty}|R_j^{\mathcal{A}}|<\infty\}.\]
Condition $\sum_{j\geq1}|R_j^{\mathcal{A}}|<\infty$ implies that to every $\mathcal{A}\in\fin_{\ar}$ there is an $r_{\mathcal{A}}:=\max\{k\geq1: R_k^{\mathcal{A}}\neq\emptyset\}<\infty$, making it convenient to sometimes express $\mathcal{A}\in\fin_{\ar}$ as $(R_1^{\mathcal{A}},\ldots,R_{r_{\mathcal{A}}}^{\mathcal{A}})$ 
by omitting the infinite sequence of empty relations after $r_{\mathcal{A}}$.

For any $A^*\supseteq A$ and any injection $\rho:A^*\to A'$ there is an induced action on $\mathcal{A}=(R_1^{\mathcal{A}},R_2^{\mathcal{A}},\ldots)$ by $\mathcal{A}\mapsto\rho(\mathcal{A})=(R_1^{\rho(\mathcal{A})}, R_2^{\rho(\mathcal{A})},\ldots)$ with $\dom\rho(\mathcal{A})=\rho(A):=\{\rho(a): a\in A\}$ and
\begin{equation}\label{eq:relabel}(\rho(a_1),\ldots,\rho(a_{\ar(j)}))\in R_j^{\rho(\mathcal{A})}\quad\text{if and only if}\quad(a_1,\ldots,a_{\ar(j)})\in R_j^{\mathcal{A}}\end{equation}
for each $j=1,2,\ldots$.

For $n\geq1$ and any at most countable subset of finite $\ar$-structures $\mathcal{R}\subseteq\fin_{\ar}$ for which each $\mathcal{A}\in\mathcal{R}$ has $\dom\mathcal{A}\subseteq\Zb_+:=\{1,2,\ldots\}$, we consider the set $\Rncong$ of equivalence classes of $\mathcal{R}$-valued sequences $\xbf:[n]\to\mathcal{R}$ obtained as follows.
Any $\xbf:[n]\to\mathcal{R}$ determines a {\em relationally labeled structure} by removing the labels of the elements contained in each of the $\xbf(i)$ while maintaining the integrity of the overall structure induced by $\xbf$.
Formally, we define the {\em relationally labeled structure induced by $\xbf$} as the equivalence class
\begin{equation}\label{eq:cong}\xbf_{\cong}:=\{\xbf':[n]\to\mathcal{R}: \rho \xbf'= \xbf\text{ for some bijection }\rho:\Zb\to\Zb\},\end{equation}
where $\rho \xbf:[n]\to\mathcal{R}$ is defined by $(\rho \xbf)(i)=\rho(\xbf(i))$ as in \eqref{eq:relabel} above.
We write $\Rncong$ as the set of all $\xbf_{\cong}$ constructed from
some $\xbf:[n]\to\mathcal{R}$ in this way.

\begin{rmk}[Notation]
Below we reserve lowercase letters $x,y,\ldots$ for generic (non-random) objects and uppercase letters $X,Y,\ldots$ for random objects.  We use bold letters $\xbf,\ybf,\ldots$ for generic (non-random) functions $\xbf:[n]\to\mathcal{R}$ and $\Xbf,\Ybf,\ldots$ for random functions $\Xbf:[n]\to\mathcal{R}$.
\end{rmk}

\begin{example}\label{ex:example}
In this example, we demonstrate how the structures in Section \ref{section:introduction} fit into the framework of $\alpha$-structures.  For this we write $\mathcal{A}=(R_1^{\mathcal{A}},R_2^{\mathcal{A}},\ldots)$ as an $\alpha$-structure with $\dom\mathcal{A}=A$ and $\alpha$ varying according to the context.  

For $\ar(1)=1$ and $\ar(k)=0$ for $k\geq2$, each $\ar$-structure 
$\mathcal{A}$ is determined by a subset $R_1^{\mathcal{A}}\subseteq A$.
If we then choose $\mathcal{R}\subseteq\fin_{\ar}$ to consist of all
singleton subsets of $\Nb$, so that each $R_1^{\mathcal{A}}$ has the
form $\{i\}$, $i\geq1$, then the equivalence class of
$\xbf:[n]\to\mathcal{R}$ in \eqref{eq:cong} corresponds to the 
equivalence relation induced by $\xbf$ as in \eqref{eq:equiv-relation}.  

For $\ar(1)=2$, $\ar(k)=0$ for $k\geq2$, each $\ar$-structure
$\mathcal{A}$ with $\dom\mathcal{A}=A$ corresponds to a binary relation $R_1^{\mathcal{A}}\subseteq A^2$.
Taking $\mathcal{R}\subseteq \fin_{\ar}$ to consist of all structures 
with $R_1^{\mathcal{A}}=\{(i,j)\}$ for $i\neq j$ corresponds to sampling
from the phone call database in Section \ref{section:introduction}: 
each sampled $X_k=\{(i,j)\}$ corresponds to a caller-receiver pair
$(i,j)$. The equivalence class $\xbf_{\cong}$ gives an edge-labeled
(directed) graph as in Figure \ref{fig:calls-exch}.  (To get an
undirected graph we would take each $R_1^{\mathcal{A}}$ to consist of symmetric pairs $\{(i,j),(j,i)\}$.)

Taking $\alpha(j) = j$ for all $j\geq1$ means that any
$\alpha$-structure is determined by a collection
$R_j^{\mathcal{A}}\subseteq A^j$ of subsets of $j$-tuples for every
$j\geq1$.  
If we take $\mathcal{R}\subset\fin_{\ar}$ to consist only
of those $\alpha$-structures of the form
$R_k^{\mathcal{A}}=\{(a_{\sigma(1)},a_{\sigma(2)},\ldots,a_{\sigma(k)}):
\text{permutations }\sigma:[k]\to[k] \}$ for some $k\geq1$ and
$R_j^{\mathcal{A}}=\emptyset$ for all $j\neq k$, then the elements
$\mathcal{A}\in\mathcal{R}$ can be used to represent the set of
coauthors in a sample of articles.

Also for $\alpha(j) = j$ for all $j \geq 1$, if we take
$\mathcal{R}\subset\fin_{\ar}$ to consist only of those
$\alpha$-structures of the form
$R_k^{\mathcal{A}}=\{(a_1,a_2,\ldots,a_{k})\}$ for some $k\geq1$ and
$R_j^{\mathcal{A}}=\emptyset$ for all $j\neq k$, then each
$\mathcal{A}\in\mathcal{R}$ corresponds to a path from $a_1$ to
$a_{k}$, as discussed at the end of Section
\ref{section:introduction}.

\end{example}

Our discussion below specializes to the case of random structures labeled by the countable set $\Nb$, 
which are those structures constructed just as in \eqref{eq:cong} but for a countable sequence $\xbf:\Nb\to\mathcal{R}$.  
(Note well the difference between the index set $\Nb$ and the domain of the structures in $\mathcal{R}$, 
which we take to be $\Zb_{+}$.  The domain labeling is ``quotiented out'' in \eqref{eq:cong}, while the indexing $\Nb$ remains, serving as the ``edge labels'' in the associated interpretation as an edge-labeled graph in Figure \ref{fig:calls-exch}.) 

We write $\RNcong$ to denote the set of such structures and we equip $\RNcong$ with the Borel $\sigma$-field associated to its product-discrete topology induced by the following metric.  For any $\xbf_{\cong}\in\RNcong$ and $n\geq1$, we define the restriction $\Rn\xbf_{\cong}$ of $\xbf_{\cong}$ to $\Rncong$ as the structure obtained by taking any $\xbf'\in\xbf_{\cong}$ (i.e., $\xbf':\Nb\to\mathcal{R}$ for which there exists $\rho$ such that $\rho\xbf'=\xbf$), restricting its domain to $[n]$ by $\xbf'_{|[n]}:[n]\to\mathcal{R}$, $i\mapsto \xbf'(i)$, and putting $\Rn\xbf_{\cong}:=(\xbf'_{|[n]})_{\cong}$ as in \eqref{eq:cong}.  By the definition of $\RNcong$, it is clear that this is well-defined and does not depend on the specific choice of $\xbf'\in\xbf_{\cong}$.  We then define the metric on $\RNcong$ by
\[d(\xbf_{\cong},\xbf'_{\cong}):=1/(1+\sup\{n\geq1: \Rn\xbf_{\cong}=\Rn\xbf'_{\cong}\}),\quad\xbf_{\cong},\xbf'_{\cong}\in\RNcong,\]
with the convention that $1/\infty=0$, under which $\RNcong$ is compact. 

For any permutation $\sigma:\Nb\to\Nb$, we define the {\em relabeling of $\xbf_{\cong}\in\RNcong$ by $\sigma$} as the structure $\xbf_{\cong}^{\sigma}$ obtained by first choosing any $\xbf'\in\xbf_{\cong}$ and putting $\xbf_{\cong}^{\sigma}:=(\xbf'\circ\sigma)_{\cong}$, where $\xbf'\circ\sigma:\Nb\to\mathcal{R}$ is defined by usual composition of functions, $(\xbf'\circ\sigma)(i):=\xbf'(\sigma(i))$.  It is once again clear that this does not depend on the specific choice of representative $\xbf'\in\xbf_{\cong}$ since the actions of $\rho:\Nb\to\Nb$ and $\sigma:\Nb\to\Nb$ commute for all $\xbf'$, i.e., $\rho(\xbf'\circ\sigma)=(\rho\xbf')\circ\sigma$.

\begin{defn}[Relational exchangeability]\label{defn:relational}
A random structure $\Xbf_{\cong}\in\RNcong$ is {\em relationally exchangeable} if $\Xbf^{\sigma}_{\cong}\equalinlaw \Xbf_{\cong}$ for all permutations $\sigma:\Nb\to\Nb$.
\end{defn}

\subsection{Representation theorem}

Our main theorem establishes a generic construction for infinite relationally exchangeable structures $\Xbf_{\cong}$ in $\RNcong$.  The construction proceeds by sampling a sequence $X_1,X_2,\ldots$ conditionally i.i.d.\ in a related space $\mathcal{R}^\star$ and then modifying these observations to obtain a new sequence $\Xbf^{\dagger}:=(X^\dagger_1,X^\dagger_2,\ldots)$.   We then construct the equivalence class $\Xbf^{\dagger}_{\cong}$ based on the sequence $X^\dagger_1,X^\dagger_2,\ldots$ as in \eqref{eq:cong}.  Since stating the theorem formally requires some new ideas and notation, we give the construction and key ideas of the proof prior to stating the result in Theorem \ref{thm:general}.

As above, let $\ar:\Nb\to\Nb\cup\{0\}$ and $\mathcal{R}\subseteq\fin_{\ar}$ be an at most countable set of $\ar$-structures such that $\ar(j)=0$ implies $\ar(k)=0$ for all $k\geq j$ and each $\mathcal{A}\in\mathcal{R}$ has $\dom\mathcal{A}\finite\Zb_+$.  Since $\mathcal{R}$ is at most countable, we may fix an ordering $\mathcal{R}=\{\mathcal{R}_n\}_{n\geq1}$ of its elements.
Given any $\mathcal{A}=(R_1^{\mathcal{A}},R_2^{\mathcal{A}},\ldots)\in\fin_{\ar}$ and any $[0,1]$-valued sequence $(T_i)_{i\in\Zb}$, we define $T\mathcal{A}:\equiv T\circ\mathcal{A}$ as the $\alpha$-structure $T\mathcal{A}$ with $\dom(T\mathcal{A}):=\{T_i: i\in\dom\mathcal{A}\}$ and relations $R_j^{T\mathcal{A}}$ given by
\begin{equation}\label{eq:xi-structure}
(T_{a_1},\ldots,T_{a_{\alpha(j)}})\in R_j^{T\mathcal{A}}\quad\text{if and only if}\quad(a_1,\ldots,a_{\alpha(j)})\in R_j^{\mathcal{A}}\end{equation}
for each $j=1,2,\ldots$.
We then define 
\[\mathcal{R}_{[0,1]}:=\bigcup_{n\geq1}\{T\mathcal{R}_n: T=(T_i)_{i\in\Zb}\in[0,1]^{\Zb}\}\]
 as the set of $\ar$-structures obtained by associating $[0,1]$-valued labels to the elements of $\mathcal{R}$. 
 More generally, if $(\Xi_{i})_{i\in\dom\mathcal{A}}$ is a collection of subsets $\Xi_i\subseteq[0,1]$, then we define $\Xi\mathcal{A}$ by
\[\Xi\mathcal{A}:=\{T\mathcal{A}: T_i\in\Xi_i\text{ for each }i\in\dom\mathcal{A}\}.\]
We equip $\mathcal{R}_{[0,1]}$ with the $\sigma$-field on $\mathcal{R}_{[0,1]}$ generated by all sets of the form $\Xi\mathcal{A}$ with $\mathcal{A}\in\mathcal{R}$ and $\Xi=(\Xi_i)_{i\in\dom\mathcal{A}}$ a collection of Borel subsets of $[0,1]$.

From any $\xbf_{\cong}\in\RNcong$ and any sequence $\xi=(\xi_i)_{i\in\Zb}$ of i.i.d.\ Uniform$[0,1]$ random variables, we write $\xi\xbf_{\cong}$ to denote a random $\Ro$-valued sequence $(Z_i)_{i\geq1}$ obtained by first taking any representative $\xbf'\in\xbf_{\cong}$ and then putting $Z_i=\xi\xbf'(i)$ for each $i\geq1$, with $\xi\xbf'(i)$ defined as in \eqref{eq:xi-structure}.
Since $\xbf_{\cong}$ is fixed in this example, each $\xi \xbf'(i)$ corresponds to an assignment of Uniform$[0,1]$ random labels to the elements in the domain of $\xbf'(i)$.  Since $\xi_i\neq\xi_j$ with probability 1 for all $i\neq j$, it is immediate that 
the distribution of $\xi\xbf_{\cong}$ does not depend on the manner in which the representative $\xbf'$ is chosen.  
Now for any infinite relationally exchangeable structure $\Xbf_{\cong}\in\RNcong$ and a sequence $\xi$ of Uniform$[0,1]$ random variables independent of $\Xbf_{\cong}$, $\xi \Xbf_{\cong}=(Z_i)_{i\geq1}$ defines a random sequence obtained by putting $Z_i=\xi\xbf'(i)$ for a representative $\xbf'\in\xbf_{\cong}$ on the event $\Xbf_{\cong}=\xbf_{\cong}$.
By exchangeability of $\Xbf_{\cong}$, the sequence $\Zbf=(Z_i)_{i\geq1}$ obtained in this way is also exchangeable and de Finetti's theorem implies that $\Zbf$ is distributed as an i.i.d.\ sequence from a random measure $\nu$ on $\Ro$, as described in \eqref{eq:definetti}.

Given $\nu$, we define the {\em propensity of $u\in[0,1]$ in $\Zbf$} by
\begin{equation}\label{eq:nu-star}\nu^\star(u):=\nu(\{\mathcal{A}\in\mathcal{R}_{[0,1]}: u\in\dom\mathcal{A}\}),\end{equation}
which equals the conditional probability of the event $\{u\in \dom Z_j\}$ given $\nu$, for each $j\geq1$.
It is clear that each $u\in[0,1]$ appears in either 0, 1, or infinitely many of the relations $(Z_i)_{i\geq1}$ with probability 1.  First, if $\nu^\star(u)>0$, then the strong law of large numbers implies that $u$ occurs in infinitely many of the relations $Z_i$ with proportion $\nu^\star(u)$.  If $\nu^\star(u)=0$ and $u\in Z_i$ for some $i$, then the probability that $u$ occurs in $Z_j$ is $\nu^\star(u)=0$ independently for each $j\neq i$ and therefore $u$ appears only once in $\Zbf$ with probability 1.

Clearly the set $\mathcal{U}=\{u:\nu^\star(u)>0\}$ is at most countable since $|\dom(Z_i)|<\infty$ for each $i=1,2,\ldots$. We can, therefore, order the elements of $\mathcal{U}$ as $u_1,u_2,\ldots$ such that $\nu^\star(u_j)\geq\nu^\star(u_{j+1})$ for $j\geq1$, breaking ties $\nu^\star(v)=\nu^\star(w)$ as follows.  For each $\mathcal{A}\in\mathcal{R}$, we define
\[\nu^\star(u;\mathcal{A}):=\nu(\{T\mathcal{A}: T=(T_i)_{i\in\Zb}\in[0,1]^{\Zb}\text{ such that } u\in\dom(T\mathcal{A})\}),\quad u\in[0,1],\]
to be the measure assigned to the subset of $\mathcal{R}_{[0,1]}$ whose structure is consistent with $\mathcal{A}$ and which contains $u$ in its domain.\label{tiebreak}
Assuming $\nu^\star(v)=\nu^\star(w)$, we assign the smaller label to $v$ if there is some $k\geq1$ such that $\nu^\star(v;\mathcal{R}_j)=\nu^\star(w;\mathcal{R}_j)$ for all $j<k$ and $\nu^\star(v;\mathcal{R}_k)>\nu^\star(w;\mathcal{R}_k)$.
If $\nu^\star(v;\mathcal{R}_j)=\nu^\star(w;\mathcal{R}_j)$ for all $j\geq1$, then we label $v$ and $w$ in increasing order, i.e., if we are to assign labels $j$ and $j+1$ to $v$ and $w$ and if $v<w$, then we shall put $u_j=v$ and $u_{j+1}=w$; otherwise, we put $u_j=w$ and $u_{j+1}=v$.
The ordering $\mathcal{U}=(u_j)_{j\geq1}$ is thus uniquely determined by $\nu$ and the fixed ordering of $\mathcal{R}$ chosen at the outset.

Now given $\mathcal{U}$, for any $\mathcal{A}\in\mathcal{R}_{[0,1]}$ we define $\mathcal{A}^\star$ (suppressing the dependence on $\nu$) by replacing each occurrence of $u_j\in\mathcal{U}$ by $j$ and replacing each occurrence of $v'\notin\mathcal{U}$ in $\mathcal{A}$ by a unique non-positive integer $z(v')=0,-1,-2,\ldots$ so that for $v',v''\notin\mathcal{U}$ and both in $\dom\mathcal{A}$ with $v'\leq v''$ we have $z(v')\leq z(v'')$ and the $z(v')$ are chosen to be the largest possible non-positive integers that satisfy this condition.  (For example, if $v_1<v_2<v_3$ are the only elements in $\dom\mathcal{A}$ that are not in $\mathcal{U}$, we assign $z(v_3)=0$, $z(v_2)=-1$, and $z(v_1)=-2$.)

 Every $\mathcal{A}\in\mathcal{R}_{[0,1]}$ thus corresponds to a unique such $\mathcal{A}^\star$ and we define $\mathcal{R}^\star$ as the set of all structures $\mathcal{A}^\star$ obtained in this way.  Note that since $\mathcal{R}$ is at most countable, so is $\mathcal{R}^\star$.  Also, although we have constructed $\mathcal{R}^\star$ using $\mathcal{U}$ (and therefore $\nu$), the set $\mathcal{R}^\star$ depends only on $\mathcal{R}$, justifying the term {\em $\mathcal{R}$-simplex} in our definition of $\FR$ in \eqref{eq:FR} below.

The elements of $\mathcal{R}^\star$ are thus $\ar$-structures $\mathcal{A}^\star$ with $\dom\mathcal{A}^\star\subseteq\Zb$.
We define the $\mathcal{R}$-simplex $\FR$ by 
\begin{equation}\label{eq:FR}\FR:=\left\{(f_{\mathcal{B}})_{\mathcal{B}\in\mathcal{R}^\star}:\ f_{\mathcal{B}}\geq0\text{ and } \sum_{\mathcal{B}\in\mathcal{R}^\star}f_{\mathcal{B}}=1\right\},\end{equation}
on which we equip the metric
\[d_{\FR}(f,f'):=\sum_{\mathcal{B}\in\mathcal{R}^\star}|f_{\mathcal{B}}-f'_{\mathcal{B}}|,\quad f,f'\in\FR,\]
and the associated Borel $\sigma$-field.

Any $f\in\FR$ determines a unique probability measure $\varepsilon_f$ on $\RNcong$ by first drawing $\Xbf=(X_1,X_2,\ldots)$ i.i.d.\ from
\begin{equation}\label{eq:f-induced}\mathbb{P}(X_i=\mathcal{B}\mid f)=f_{\mathcal{B}},\quad\mathcal{B}\in\mathcal{R}^\star,\end{equation}
and then constructing $\Xbf^\dagger=(X^\dagger_1,X^\dagger_2,\ldots)$ from $\Xbf$ as follows.
We initialize by putting $m_0=0$.
For each $n\geq1$, given $m_{n-1}$, we replace the non-positive elements in $\dom X_n$ according to the rule:
\begin{itemize}
	\item[(i)]  If $\dom X_n$ has no non-positive elements, then put $m_n=m_{n-1}$ and $X^\dagger_n:=X_n$.  
	\item[(ii)] If $\dom X_n$ has non-positive elements $0,\ldots,-k$ for $k\geq0$, then define $X^\dagger_n$ by replacing each occurrence of $-i$, for $0\leq i\leq k$, in $X_n$ by $m_{n-1}-i$, then putting $m_{n}=m_{n-1}-k-1$ and keeping positive elements unchanged.  See Example \ref{example:partition} below for an illustration.
\end{itemize}
Note the distinction between the use of non-positive elements in construction of $X_i$ and $X_i^\dagger$ respectively.
The non-positive elements of each $X_i \in \mathcal{R}^\star$ serve to denote the non-recurring particles that appear \emph{only within} this particular relation.
The non-positive elements of $X_i^\dagger$, on the other hand, serve to denote the non-recurring particles across \emph{all} relations in the
sequence~$\Xbf^\dagger$.
We define $\varepsilon_f$ to be the distribution of $\Xbf^\dagger_{\cong}$ constructed by applying \eqref{eq:cong} to the sequence $\Xbf^\dagger$.

From $\nu$, we define $f^{\nu}=(f_{\mathcal{B}}^{\nu})_{\mathcal{B}\in\mathcal{R}^\star}\in\FR$ by
\begin{equation}\label{eq:canonical}f^{\nu}_{\mathcal{B}}:=\nu(\{\mathcal{A}\in\mathcal{R}_{[0,1]}: \mathcal{A}^\star=\mathcal{B}\}),\quad\mathcal{B}\in\mathcal{R}^\star.\end{equation}
This choice of $f^{\nu}=(f^{\nu}_{\mathcal{B}})_{\mathcal{B}\in\mathcal{R}^\star}$ is uniquely determined by $\nu$ and the fixed ordering of $\mathcal{R}$.
Conversely, given $f=(f_{\mathcal{B}})_{\mathcal{B}\in\mathcal{R}^\star}\in\FR$ we construct a measure $\nu_f$ on $\mathcal{R}_{[0,1]}$ to be the distribution of $\xi Y$ from \eqref{eq:xi-structure} for $Y$ drawn from distribution \eqref{eq:f-induced} and $\xi=(\xi_{i})_{i\in\mathbb{Z}}$ i.i.d.\ Uniform$[0,1]$ independent of $Y$.
Proposition~\ref{prop:nu} proves that the above procedure
does not alter the random, relationally labeled structure.

\begin{prop}\label{prop:nu}
Let $\Xbf_{\cong}$ be relationally exchangeable and let $\theta=(\theta_i)_{i\in\Zb}$ be i.i.d.\ Uniform$[0,1]$ independent of $\Xbf_{\cong}$.  Then $(((\theta\Xbf_{\cong})^{\star})^{\dagger})_{\cong}=\Xbf_{\cong}$ a.s., where $(\theta\Xbf_{\cong})^\star$ denotes the application of $^{\star}:\mathcal{R}_{[0,1]}\to\mathcal{R}^\star$ to each component of the sequence $\theta\Xbf_{\cong}$.
\end{prop}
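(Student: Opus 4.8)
The plan is to reduce the claimed almost-sure identity in $\RNcong$ to the construction of a single random bijection $\rho:\Zb\to\Zb$ carrying a representative of $(((\theta\Xbf_{\cong})^{\star})^{\dagger})_{\cong}$ onto a representative of $\Xbf_{\cong}$. By the definition of the equivalence class in \eqref{eq:cong}, two $\mathcal{R}$-valued sequences lie in the same class of $\RNcong$ exactly when one is obtained from the other by relabeling the domain through such a bijection, so exhibiting $\rho$ suffices.

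First I fix (measurably) a representative $\xbf'\in\Xbf_{\cong}$ with $\dom\xbf'\subseteq\Zb$ and write $\Zbf=\theta\Xbf_{\cong}=(Z_i)_{i\geq1}$, $Z_i=\theta\xbf'(i)$, so that $a\mapsto\theta_a$ is an almost surely injective labeling of $D:=\bigcup_{i\geq1}\dom\xbf'(i)$ by distinct points of $[0,1]$. I partition $D$ into the set $D_\infty$ of elements occurring in infinitely many $\xbf'(i)$ and its complement $D_1$. The two facts I need from the discussion preceding the statement, transported through the coupling $\Zbf=\theta\Xbf_{\cong}$, are: (i) $\theta_a\in\mathcal{U}$ if and only if $a\in D_\infty$, since $\theta_a$ occurs in infinitely many $Z_i$ iff $a$ occurs in infinitely many $\xbf'(i)$ iff $\nu^\star(\theta_a)>0$; consequently the decreasing, tie-broken enumeration $\mathcal{U}=(u_j)_{j\geq1}$ induces a bijection $\pi:D_\infty\to\Zb_+$ via $\theta_a=u_{\pi(a)}$; and (ii) every $a\in D_1$ occurs in exactly one relation almost surely, because $\nu^\star(\theta_a)=0$ forces the label $\theta_a$ to appear only once in $\Zbf$. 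Both are precisely the strong-law and independence statements already derived from de Finetti's theorem.

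Next I trace the two relabelings, writing $\Xbf^{\dagger}=(X^{\dagger}_i)_{i\geq1}$ for $((\theta\Xbf_{\cong})^{\star})^{\dagger}$. Applying $^{\star}$ to $Z_i$ sends each recurring $a\in D_\infty\cap\dom\xbf'(i)$ to the positive integer $\pi(a)$ and each $a\in D_1\cap\dom\xbf'(i)$ to a non-positive integer, the non-positive labels of $Z_i^{\star}$ being in bijection with $D_1\cap\dom\xbf'(i)$ by (i). Applying $^{\dagger}$ to $(Z_i^{\star})_{i\geq1}$ fixes the positive integers and, since the counter satisfies $0=m_0>m_1>\cdots$ with the consecutive blocks $\{m_i+1,\dots,m_{i-1}\}$ disjoint across $i$, assigns pairwise distinct non-positive labels; combined with (ii), each $a\in D_1$ thereby receives a single global label $g(a)$, and distinct elements of $D_1$ receive distinct labels. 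I then define $\rho$ on $\dom\Xbf^{\dagger}=\Zb_+\cup g(D_1)$ by $\rho(j)=\pi^{-1}(j)$ for $j\geq1$ and $\rho(g(a))=a$ for $a\in D_1$, extending it arbitrarily to a bijection of $\Zb$. Because $^{\star}$ and $^{\dagger}$ act, relation by relation, as injective relabelings preserving which tuples lie in which $R_j$ (by \eqref{eq:xi-structure} and the definitions of $^{\star}$ and $^{\dagger}$), the composite domain map $\dom\xbf'(i)\to\dom X^{\dagger}_i$ is inverted by $\rho$ on its image, whence $\rho(X^{\dagger}_i)=\xbf'(i)$ for every $i$, so $\rho\,\Xbf^{\dagger}=\xbf'$ and $\Xbf^{\dagger}_{\cong}=\xbf'_{\cong}=\Xbf_{\cong}$.

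The main obstacle is fact (ii): one must be certain that the ``dust'' elements of $\Xbf_{\cong}$, those of finite recurrence, in fact occur exactly once almost surely, since it is this that guarantees the local, per-relation non-positive labels introduced by $^{\star}$ encode no cross-relation identifications that $^{\dagger}$ would have to reproduce. Equivalently, the delicate point is matching the atomic/continuous decomposition of the de Finetti measure $\nu$ on $\mathcal{R}_{[0,1]}$ to the recurring/non-recurring dichotomy of $\Xbf_{\cong}$ under the coupling $\Zbf=\theta\Xbf_{\cong}$. Once (i) and (ii) are in hand, the remainder is the bookkeeping that $\rho$ is a well-defined bijection respecting the relational structure, which follows directly from the constructions of $^{\star}$ and $^{\dagger}$.
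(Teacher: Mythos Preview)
Your proposal is correct and follows essentially the same approach as the paper: condition on $\Xbf_{\cong}=\xbf_{\cong}$, use the almost-sure injectivity of $\theta$ together with the $0/1/\infty$ dichotomy already established before the proposition to identify $\mathcal{U}$ with the infinitely-recurring elements, and conclude that $((\theta\xbf_{\cong})^{\star})^{\dagger}$ is a relabeling of a representative of $\xbf_{\cong}$. The paper's proof is much terser---it simply asserts that the non-positive labels in $(\theta\xbf_{\cong})^{\star}$ account precisely for the once-appearing elements and hence $((\theta\xbf_{\cong})^{\star})^{\dagger}\in\xbf_{\cong}$---whereas you spell out the bijection $\rho$ explicitly, but the content is the same.
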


\begin{proof}
Let $\theta=(\theta_i)_{i\in\Zb}$ be i.i.d.\ Uniform$[0,1]$ independently of $\Xbf_{\cong}$.  
Each event $\Xbf_{\cong}=\xbf_{\cong}$ gives rise to a probability measure $\nu$ on $\mathcal{R}_{[0,1]}$ through the de Finetti measure of $\theta\xbf_{\cong}$; see \eqref{eq:definetti}.  Let $\mathcal{U}=(u_i)_{i\geq1}$ be the ordered subset of $[0,1]$ corresponding to the atoms of $\nu^\star$ as in \eqref{eq:nu-star}.  Since $\theta$ are i.i.d.\ Uniform$[0,1]$, we have
\[\mathbb{P}\{\theta_i\neq\theta_j\text{ for all }i\neq j\}=1,\]
which implies that distinct $i,j\in\Zb$ are labeled distinctly in $\theta\xbf_{\cong}$ with probability 1.
In particular, the non-positive labels in $(\theta\xbf_{\cong})^\star$ account for all those elements that appear in only one entry of the sequence $\theta\xbf_{\cong}$.
It follows that $((\theta\xbf_{\cong})^\star)^{\dagger}\in\xbf_{\cong}$ with probability 1 and, thus, $(((\theta\xbf_{\cong})^\star)^{\dagger})_{\cong}=\xbf_{\cong}$ with probability 1 for all possible outcomes $\Xbf_{\cong}=\xbf_{\cong}$.
\end{proof}

\begin{example}\label{example:partition}
To illustrate the above procedure, let $\mathcal{R}$ consist of the singleton sets $(\{i\})$, $i\geq1$, just as in the first part of Example \ref{ex:example}, written as $\{1\},\{2\},\ldots$ for simplicity.
Let $\mathcal{U}=(u_i)_{i\geq1}$ be a countable subset of $[0,1]$.
From $\mathcal{R}=\{\{1\},\{2\},\ldots\}$, we obtain $\mathcal{R}^\star=\{\{0\},\{1\},\ldots\}$ since for any sequence $T=(T_i)_{i\in\Zb}$ and any $\{i\}$, $i\geq1$, the transformed relation $T\{i\}\equiv\{T_i\}$ has either $T_i\in\mathcal{U}$ or $T_i\not\in\mathcal{U}$.  If $T_i=u_k\in\mathcal{U}$, then $\{T_i\}^\star=\{k\}$ for $k\geq1$; and if $T_i=v\not\in\mathcal{U}$, then $\{T_i\}^\star = (\{0\})$.

Given $(f_{\{0\}},f_{\{1\}},\ldots)\in\FR$, we generate a sequence $X_1,X_2,\ldots$ of singleton sets i.i.d.\ as in \eqref{eq:f-induced}, from which we obtain $X_1^{\dagger},X_2^{\dagger},\ldots$ by reassigning occurrences of $0$ to the greatest negative integer that has not yet appeared in the sequence.
For example, if the sequence begins $\{4\},\{0\},\{2\},\{0\},\{2\},\{2\},\{0\},\ldots$, then we reassign labels to obtain $\{4\},\{0\},\{2\},\{-1\},\{2\},\{2\},\{-2\},\ldots$.
Delabeling according to \eqref{eq:cong} (or equivalently
\eqref{eq:equiv-relation}) yields the equivalence classes $\{1\}$,
$\{2\}$, $\{3,5,6\}$, $\{4\}$, $\{7\}$.

For another example, let $\mathcal{R}$ consist of ordered pairs $\{(i,j)\}$, $1\leq i\neq j<\infty$, so that we are in the phone call example of Section \ref{section:introduction}.  Let $\mathcal{U}=(u_i)_{i\geq1}$ be a countable subset of $[0,1]$.
Then $\mathcal{R}=\{\{(i,j)\}: i\neq j\geq1\}$ and 
\[\mathcal{R}^{\star}=\{\{(i,0)\}: i\geq1\}\cup\{\{(0,i)\}: i\geq1\}\cup\{\{(0,-1)\},\{(-1,0)\}\}\cup\mathcal{R}\] since any $T=(T_i)_{i\in\Zb}$ and any $\{(i,j)\}\in\mathcal{R}$ is transformed by $T\{(i,j)\}=\{(T_i,T_j)\}$, which may have $T_i=u_{i'}\in\mathcal{U}$ and $T_j=u_{j'}\in\mathcal{U}$ in which case $\{(T_i,T_j)\}^{\star}=\{(i',j')\}$.  If $T_i=u_{i'}\in\mathcal{U}$ and $T_j\not\in\mathcal{U}$, then $\{(T_i,T_j)\}^{\star}=\{(i',0)\}$.  If $T_i\not\in\mathcal{U}$ and $T_j=u_{j'}$ then $\{(T_i,T_j)\}^{\star}=\{(0,j')\}$.  If $T_i,T_j\not\in\mathcal{U}$ then $\{(T_i,T_j)\}^{\star}$ will equal $\{(0,-1)\}$ or $\{(-1,0)\}$ depending on whether $T_j<T_i$ or $T_i<T_j$, respectively.
\end{example}

The above construction gives the following representation theorem.

\begin{thm}\label{thm:general}
Let $\ar$ be a signature, $\mathcal{R}\subseteq\fin_{\ar}$, and fix an ordering $\mathcal{R}=(\mathcal{R}_n)_{n\geq1}$.
Let $\Xbf_{\cong}$ be a relationally exchangeable random structure in $\RNcong$.
Then there exists a probability measure $\phi$ on $\FR$ such that $\Xbf_{\cong}\sim\varepsilon_{\phi}$, where
\begin{equation}\label{eq:epsilon-phi}\varepsilon_{\phi}(\cdot)=\int_{\FR}\varepsilon_f(\cdot)\phi(df).\end{equation}

\end{thm}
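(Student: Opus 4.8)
The plan is to reduce the statement to de Finetti's theorem \eqref{eq:definetti} by attaching i.i.d.\ Uniform$[0,1]$ labels to $\Xbf_{\cong}$, and then to transport the resulting de Finetti measure back to $\FR$ through the two maps $\nu\mapsto f^{\nu}$ and $f\mapsto\varepsilon_f$ developed above. First I would let $\theta=(\theta_i)_{i\in\Zb}$ be i.i.d.\ Uniform$[0,1]$ independent of $\Xbf_{\cong}$ and set $\Zbf=\theta\Xbf_{\cong}=(Z_i)_{i\geq1}$, a random sequence in $\Ro$. Relational exchangeability of $\Xbf_{\cong}$ together with the i.i.d.\ independent labels $\theta$ makes $\Zbf$ an exchangeable $\Ro$-valued sequence. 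Provided $\Ro$ with its stated $\sigma$-field is a Borel space, de Finetti's theorem yields a random probability measure $\nu$ on $\Ro$, with an associated de Finetti measure $\phi_0$ on $\mathcal{P}(\Ro)$, such that conditionally on $\nu$ the $Z_i$ are i.i.d.\ with common law $\nu$.

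Next I would apply the map $^{\star}:\Ro\to\mathcal{R}^\star$ componentwise. Because $^{\star}$ is determined by $\nu$ alone (through the ordered atom set $\mathcal{U}$), conditionally on $\nu$ it acts as one fixed measurable map on the i.i.d.\ sequence $(Z_i)$, so $(Z_i^{\star})_{i\geq1}$ is, conditionally on $\nu$, i.i.d.\ with law equal to the pushforward of $\nu$ under $^{\star}$. By the defining identity \eqref{eq:canonical}, that pushforward is exactly $f^{\nu}\in\FR$. Now Proposition~\ref{prop:nu} gives $(((\theta\Xbf_{\cong})^{\star})^{\dagger})_{\cong}=\Xbf_{\cong}$ almost surely; that is, $\Xbf_{\cong}=\Psi((Z_i^{\star})_{i\geq1})$ for the fixed, $\nu$-independent measurable map $\Psi$ that builds $\Xbf^{\dagger}$ from a sequence in $\mathcal{R}^\star$ and then passes to the equivalence class \eqref{eq:cong}. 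This $\Psi$ is precisely the map defining $\varepsilon_f$ in the construction following \eqref{eq:f-induced}. Combining the two facts, conditionally on $\nu$ the law of $\Xbf_{\cong}$ equals the law of $\Psi$ applied to an i.i.d.\ $f^{\nu}$-sequence, namely $\varepsilon_{f^{\nu}}$.

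Finally I would let $\phi$ be the pushforward of $\phi_0$ under $\nu\mapsto f^{\nu}$ and integrate the conditional law over $\nu\sim\phi_0$:
\[
\mathbb{P}(\Xbf_{\cong}\in\cdot)=\int_{\mathcal{P}(\Ro)}\varepsilon_{f^{\nu}}(\cdot)\,\phi_0(d\nu)=\int_{\FR}\varepsilon_f(\cdot)\,\phi(df)=\varepsilon_{\phi}(\cdot),
\]
which is \eqref{eq:epsilon-phi}.

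The routine part is the bookkeeping across these three changes of variable; the substantive obstacle is measurability and well-definedness. One must verify that $\Ro$ is a Borel space so that de Finetti applies, that the ordering $\mathcal{U}=(u_j)_{j\geq1}$ and hence both $^{\star}$ and $\nu\mapsto f^{\nu}$ are genuinely measurable functions of $\nu$---which is exactly what the deterministic tie-breaking rule (page~\pageref{tiebreak}) is engineered to guarantee---and that $\Psi$ is measurable so that the conditional-law identification is legitimate. The delicate point is that $^{\star}$ depends on $\nu$ whereas $\Psi$ does not; isolating this dependence is what lets the conditional sequence $(Z_i^{\star})$ be i.i.d.\ from the single measure $f^{\nu}$ and thereby match the definition of $\varepsilon_{f^{\nu}}$ exactly.
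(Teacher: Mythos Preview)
Your proposal is correct and follows essentially the same strategy as the paper: attach i.i.d.\ Uniform$[0,1]$ labels to obtain an exchangeable $\Ro$-valued sequence, invoke de Finetti to get a directing measure $\nu$, push forward via $\nu\mapsto f^{\nu}$ from \eqref{eq:canonical}, and then use Proposition~\ref{prop:nu} to identify the resulting law with that of $\Xbf_{\cong}$. Your argument is in fact slightly more direct than the paper's, which introduces an auxiliary sequence $\Ybf^{\dagger}\sim\varepsilon_{\phi}$ and compares it to $\Xbf_{\cong}$ through matched conditional laws, applying Proposition~\ref{prop:nu} on both sides; you instead observe immediately that, conditionally on $\nu$, the map $^{\star}$ is fixed so $(Z_i^{\star})$ is i.i.d.\ $f^{\nu}$, and then one application of Proposition~\ref{prop:nu} finishes.
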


A canonical version of the measure $\phi$ in Theorem \ref{thm:general}
can be constructed as in \eqref{eq:canonical} above, and this is the measure we construct in the following proof.

\begin{proof}
We proceed by constructing $\phi$ as in \eqref{eq:canonical} and showing that $\Ybf_{\cong}^{\dagger}\sim\varepsilon_{\phi}$ satisfies $\Ybf_{\cong}^{\dagger}\equalinlaw\Xbf_{\cong}$.

To see this, we first let $\psi$ be the de Finetti measure on $\mathcal{P}(\mathcal{R}_{[0,1]})$ (i.e., the space of probability measures on $\mathcal{R}_{[0,1]}$) associated to $\xi\Xbf_{\cong}$ for $\xi=(\xi_{i})_{i\in\Zb}$ i.i.d.\ Uniform$[0,1]$ independently of $\Xbf_{\cong}$.  In particular, the distribution of $\xi\Xbf_{\cong}$ is conditionally i.i.d.\ from $\nu\sim\psi$.  
 The measure $\psi$ determines a measure $\phi$ on $\FR$ through \eqref{eq:canonical}.  
 
Given $f\sim\phi$, we construct $\Ybf=(Y_1,Y_2,\ldots)$ as conditionally i.i.d.\ from the distribution in \eqref{eq:f-induced} and $\Ybf^\dagger$ as in (i) and (ii) above.  Let $\mathcal{U}=(u_i)_{i\geq1}$ be the atoms of $\theta\Ybf^{\dagger}$ for $\theta=(\theta_i)_{i\in\Zb}$ i.i.d.\ Uniform$[0,1]$ independent of $\Ybf^{\dagger}$, let $\zeta=(\zeta_{k})_{k\leq0}$ be i.i.d.\ Uniform$[0,1]$ independently of everything else, and define $\xi_f=(\xi_i)_{i\in\mathbb{Z}}$ by
\[\xi_i=\left\{\begin{array}{cc} u_i,& i\geq1,\\ \zeta_i,&
                                                           \text{otherwise.}\end{array}\right.\]
First note that the conditional distribution of $\xi_f\Ybf^\dagger$
given $f$ is the same as the conditional law of ${\theta}\Ybf^{\dagger}$
given $\nu_f$, since the $\xi_i$ for $i\geq1$ were constructed from the atoms of $\theta\Ybf^{\dagger}_{\cong}$.    Writing $\mathcal{D}(X\mid Y)$ to denote the conditional distribution of $X$ given $Y$, we thus have
\begin{align}
\mathcal{D}(\xi_f\Ybf_{\cong}^{\dagger}\mid f)&=\mathcal{D}(\theta\Ybf^{\dagger}\mid\nu_f)\label{eq:1}\quad\text{and}\\
\mathcal{D}(\xi_f\Ybf_{\cong}^{\dagger}\mid f)&=\mathcal{D}(\theta\Xbf_{\cong}\mid\nu_f),\label{eq:2}
\end{align}
where \eqref{eq:2} follows from the construction of $\phi$ from the de Finetti measure $\psi$ of $\xi\Xbf_{\cong}$.
From \eqref{eq:1} and \eqref{eq:2} it follows that
\begin{align*}
\mathcal{D}((((\xi_f\Ybf_{\cong}^{\dagger})^{\star})^{\dagger})_{\cong}\mid f)&=\mathcal{D}((((\theta\Ybf^{\dagger})^{\star})^{\dagger})_{\cong}\mid\nu_f)\quad\text{and}\\
\mathcal{D}((((\xi_f\Ybf^{\dagger}_{\cong})^\star)^{\dagger})_{\cong}\mid f)&=\mathcal{D}(((({\theta}
\Xbf_{\cong})^\star)^{\dagger})_{\cong}\mid \nu_f), 
\end{align*}
and thus
\[\mathcal{D}((((\theta\Ybf^{\dagger})^{\star})^{\dagger})_{\cong}\mid\nu_f)=\mathcal{D}(((({\theta}
\Xbf_{\cong})^\star)^{\dagger})_{\cong}\mid \nu_f).\]
By Proposition \ref{prop:nu}, we have
$(((\theta\Ybf^{\dagger}_{\cong})^{\star})^\dagger)_{\cong}=\Ybf^{\dagger}_{\cong}$
a.s.\ and
$((({\theta}\Xbf_{\cong})^\star)^{\dagger})_{\cong}=\Xbf_{\cong}$,
implying $\Ybf^{\dagger}_{\cong}\equalinlaw\Xbf_{\cong}$ as desired.

\end{proof}

\subsection{Special cases}

As discussed in Example \ref{example:partition}, the case in which $\mathcal{R}$ corresponds to the singleton sets $\{i\}$ for $i\geq1$ gives $\FR$ equal to the ranked simplex
\[\Delta^{\downarrow}:=\left\{(f_0,f_1,\ldots): f_1\geq f_2\geq\cdots\geq0,\ f_0\geq0,\ \sum_{i\geq0}f_i=1\right\}.\]
To see this note that $\mathcal{R}_{[0,1]}$ is the set of singleton elements of $[0,1]$, i.e.,
\[\mathcal{R}_{[0,1]}:=\{\{u\}: u\in[0,1]\}.\]
An exchangeable sequence $\Xbf=(X_1,X_2,\ldots)$ in $\mathcal{R}_{[0,1]}$ gives rise to a random countable subset $\mathcal{U}\subset[0,1]$ of elements that appear infinitely often among the $X_i$ and its complement $[0,1]\setminus\mathcal{U}$ consisting of all elements appearing at most once among the $X_i$.  In the construction of $\Xbf^\dagger$ from $\Xbf$ described above, any occurrence $X_i=\{u\}$ for $u\notin\mathcal{U}$ gives rise to $X_i^{\star}=\{0\}$, explaining why the definition of $\FR$ in \eqref{eq:FR} corresponds to the simplex of elements $(f_i)_{i\geq0}$.

Following Example \ref{ex:example}, the case of edge exchangeable random (directed) graphs corresponds to $\mathcal{R}\subset\fin_{\ar}$ with $\ar(1)=2$ and $\ar(k)=0$ for $k\geq2$, with each $\mathcal{A}\in\mathcal{R}$ having $R_1^{\mathcal{A}}=\{(i,j)\}$, $i\neq j$.  In this case, $\mathcal{R}_{[0,1]}$ corresponds to all pairs $(u,v)$ for $u,v\in[0,1]$.  An exchangeable sequence $\Xbf$ in $\mathcal{R}_{[0,1]}$ determines a random subset $\mathcal{U}\subset[0,1]$.  Now, in each $X_i$ there can be $0$, $1$, or $2$ elements $v\notin\mathcal{U}$.  Occurrences of such elements are replaced by $0$ and $-1$, as explained in Example \ref{example:partition}.

In general, we call the occurrences of any $u\notin\mathcal{U}$ in the sequence $\Xbf$ {\em blips}.  These elements are merely a `blip' in the overall sequence $\Xbf$---they occur only for an instant and then never again.  Our labeling convention in defining $\mathcal{R}^\star$ is that the non-positive labels correspond to the blips.  Theorem \ref{thm:general} describes how blips arise in general relationally exchangeable structures.

\section{Concluding}

Recent work in the statistical analysis of network data underscores the significance of relationally labeled structures in applications, as many data structures which are typically represented graphically, such as social networks and networks detailing email correspondence and professional collaborations, arise from a process by which interactions or relations accumulate within a population of otherwise indistinguishable individuals.
The work in \cite{CD2016e2} focused on the case of edge and hyperedge exchangeable random graphs, but the additional examples in Section \ref{section:introduction} involving repeated path sampling and snowball sampling are also highly relevant in networks applications.
The representation theorem serves two immediate statistical purposes.
First, the representation characterizes a general class of nonparametric statistical models of potential interest in the aforementioned applications.
Second, it establishes that vertices arrive in size-biased random order in relationally exchangeable structures, explaining why the common assumption of exchangeable vertex labeling, as presented in graphon models \cite{LovaszSzegedy2006}, is not tenable in many applications.
We reserve discussion of these practical implications for other work; see \cite{CD2016e2,CD2015AOS}.

\end{document}